\newcommand{\project}{\mathrm{P}}
\newcommand{\bK}{\mathbb{K}}
\newcommand{\bR}{\mathbb{R}}
\newcommand{\rep}{\bullet}
\newcommand{\plan}{\mathrm{P}}
\newtheorem{ques}{Question}
\title{Polarités définies par un triangle}
\author{Benoît Kloeckner}
\begin{document}
%%%%%%%%%%%%%%%%%%%%%%%%%%%%%%%%%%%%%%%%%%%%%%%%%%%%%%%%%%%%%%%%%%%%%%%%%%%%%
%%%%%%%%%%%%%%%%%%%%%%%%%%%%%%%%%%%%%%%%%%%%%%%%%%%%%%%%%%%%%%%%%%%%%%%%%%%%%
%%%%%%%%%%%%%%%%%%%%%%%%%%%%%%%%%%%%%%%%%%%%%%%%%%%%%%%%%%%%%%%%%%%%%%%%%%%%%

\begin{abstract}
Une polarité d'un plan projectif est une application, souvent
involutive, envoyant un point
générique sur une droite générique et réciproquement.
La polarité la plus classique est la polarité par rapport à une conique, mais
d'autres existent : la polarité harmonique par rapport à un triangle, les
polarités par rapport à une courbe algébrique de degré supérieur, la polarité
par rapport à un convexe.

Dans cet article nous introduisons une notion de polarité par rapport à un
triangle du plan projectif, motivée par une question sur la dualité des 
repères projectifs. Nous montrons que les quatre polarités évoquées qui
peuvent s'appliquer à un triangle coïncident dans ce cas.
Ce résultat fournit un prétexte à passer en revue de jolis concepts
de géométrie projective, d'algèbre linéaire et de géométrie convexe.
\end{abstract}

\maketitle

%%%%%%%%%%%%%%%%%%%%%%%%%%%%%%%%%%%%%%%%%%%%%%%%%%%%%%%%%%%%%%%%%%%%%%%%%%%%%
\section{Introduction}

On va rappeler l'ensemble des notions de géométrie projective
utilisées ici, mais le lecteur souhaitant plus de détail peut
par exemple consulter \cite{K:livre}.

On se place sur un plan projectif $\plan=\project(V)$, c'est-à-dire
sur l'ensemble des droites vectorielles d'un espace $V$ de dimension $3$ sur
un corps $\bK$. On notera $\project(\vec v)\in\plan$ la droite définie
par un vecteur $\vec v\in V$ non nul.

On fixe un \emph{triangle} $\Delta$, c'est-à-dire trois points 
non alignés $p_1,p_2,p_3\in\plan$. Les \emph{côtés} de $\Delta$
sont les droites $D_1=(p_2p_3)$, $D_2=(p_1p_3)$ et $D_3=(p_1p_2)$.

On dira qu'un point $p$, respectivement une droite $D$,
est \emph{générique} s'il n'appartient à aucun côté de $\Delta$, respectivement
si elle ne passe par aucun des sommets de $\Delta$.

Présentons rapidement les polarités usuelles que nous allons étudier. Elle
seront définies précisément plus loin.

Tout d'abord, la \emph{polarité harmonique} est définie par une construction
explicite à la règle~; elle généralise la notion de quatrième harmonique d'un
triplet dans une droite projective. On notera $p^\#$
la droite harmonique d'un point générique $p$ par rapport à $\Delta$, 
et $D^\#$ le point harmonique d'une droite générique $D$.

Ensuite, de la même façon qu'une conique définit une polarité à l'aide de sa forme
quadratique associée, une cubique définit (si $\bK$ est
de caractéristique différente de $2$ et $3$) deux «~polarités~»,
la première envoyant un point sur une conique et la deuxième un point sur une
droite. Les trois côtés de $\Delta$ définissent une cubique, on notera
$p^\perp$ la seconde polaire d'un point générique $p$ par rapport à cette cubique
et $D^\perp$ le point dont la droite générique $D$ est la seconde 
polaire.

Enfin, la notion affine de centre de gravité permet de définir quand
$\bK=\bR$ une polarité par rapport à un convexe ouvert propre, qu'on qualifiera de 
«~polarité convexe~». Elle n'est définie que pour les points appartenant
à l'intérieur du convexe, et pour les droites qui n'en rencontrent pas
l'adhérence. L'ensemble des points génériques est découpé par
les côtés de $\Delta$ en quatre composantes connexes, toutes convexes et 
triangulaires. On notera $p^\circ$ la polaire convexe du point générique $p$
par rapport à la composante à laquelle il appartient, et $D^\circ$ le polaire
convexe de la droite générique $D$ par rapport à l'unique composante qu'elle
ne rencontre pas.

Introduisons maintenant une nouvelle notion de polarité~: un point générique
$p$ définit un repère projectif $(p_1,p_2,p_3,p)$, auquel est associé un repère
projectif de $\plan^*=\project(V^*)$, constitué de quatre 
points dont les droites duales sont des droites de 
$\plan$. Il est facile de vérifier que les trois premières sont les côtés
de $\Delta$, on note $p^\rep$ la quatrième et on l'appelle la \emph{polaire repère}
de $p$ par rapport à $\Delta$. La construction réciproque permet de définir
le point polaire repère $D^\rep$ d'une droite générique $D$.

Ces quatre polarités par rapport à un triangle sont involutives,
et tout résultat sur les points se traduit donc sur les droites.
Le théorème suivant est élémentaire mais servira de fil rouge, connectant les différents
sujets que nous allons passer en revue.
\begin{theo}
Chaque fois que $\bK$ permet de définir les termes impliqués,
pour tout point générique $p$ on a :
\[ p^\rep=p^\#=p^\perp=p^\circ. \]
\end{theo}
En particulier, comme la polarité harmonique se construit à la règle, ce
résultat montre qu'il est possible de construire à la règle le repère
dual d'un repère projectif, à partir de la donnée des droites duales
des points du repère original. Cette question simple, mais
qui semble n'avoir pas été précédemment investiguée, est la motivation initiale
de ce travail.

Chacune des quatre prochaines sections est dévolue à une des polarités, 
et on prouve
au fur et à mesure qu'elles coïncident. On s'est en général placé
en dimension $2$ pour simplifier les notations et les figures, mais
la dernière section explique
brièvement comment ceci se généralise en dimension supérieure.

%%%%%%%%%%%%%%%%%%%%%%%%%%%%%%%%%%%%%%%%%%%%%%%%%%%%%%%%%%%%%%%%%%%%%%%%%%%%%
\section{Polarité repère}

Dans cette section, on présente quelques rappels élémentaires
sur les repères projectifs et on introduit la notion de repère dual et la notion
de polarité associée. Bien qu'elle soit très naturelle, 
je n'en connais pas d'occurrence dans la littérature.

%%%%%%%%%%%%
\subsection{Repères et coordonnées}

À une base $\vec{\mathscr{B}}=(\vec b_1,\vec b_2,\vec b_3)$
de $V$ on associe les points
$r_i=\project(\vec b_i)$ pour $1\leqslant i\leqslant 3$ et
$r_+=\project(\vec b_1+\vec b_2+\vec b_3)$. Alors les éléments de 
$\mathscr{R}=(r_1,r_2,r_3,r_+)$ sont trois à trois non alignés~;
on dit que $\mathscr{R}$ est un \emph{repère} de $\project(V)$ et que
la base  $\vec{\mathscr{B}}$ est \emph{adaptée} au repère $\mathscr{R}$. 
Remarquons
que toute famille obtenue en enlevant un élément à $\mathscr{R}$ engendre 
$\project(V)$.

Pour tout repère de $\project(V)$ (quatre points trois à trois non alignés,
donc), il existe une base adaptée, unique à  multiplication par un scalaire
près. Notons que pour obtenir cette unicité, le point $r_+$ est absolument
nécessaire~: les bases $(\lambda_i \vec b_i)_i$ définissent les mêmes $r_i$
que $\vec{\mathscr{B}}$, mais seules celles où $\lambda_i$ ne dépend pas de $i$
définissent également le même point $r_+$.

Le groupe projectif $\mathrm{PGL}(V)=\mathrm{GL}(V)/\bK^*$
agit transitivement sur les repères, et les différents éléments d'un
repère sont en particulier indistingables d'un point de vue projectif~; 
toutefois lorsque l'on met en relation
un repère et une base adaptée, le dernier élément a un rôle particulier, 
c'est pourquoi il est ici noté $r_+$ plutôt que $r_4$.

Le repère $\mathscr{R}$ définit des \emph{coordonnées homogènes} sur 
$\project(V)$~:
à un point $p=\project(\vec v)$ où 
$\vec v= x_1 \vec b_1+x_2\vec b_2+x_3\vec b_3\neq\vec 0$
on associe le triplet défini à multiplication près
\[ [x_1:x_2:x_3]:=\left\{ (\lambda x_1,\lambda x_2,\lambda x_3)\middle|
  \lambda\in\bK\right\}. \]
Si l'on veut s'affranchir de cette indéfinition,
on peut normaliser l'une des coordonnées non nulles~: par exemple
si $x_3\neq 0$,
on dit que $(\frac{x_1}{x_3},\frac{x_2}{x_3})$ sont les coordonnées de $p$ 
dans la  \emph{carte affine} d'équation $(x_3\neq 0)$. Un domaine de carte affine
est le complémentaire d'une droite, dite «~à l'infini~», et s'identifie
au plan affine.

%%%%%%%%%%%
\subsection{Repère dual}

L'espace dual de $\plan$ est par définition l'espace projectif 
$\plan^*:=\project(V^*)$ où $V^*$ est l'espace vectoriel dual de $V$.
On notera $a^*\subset\project(V^*)$ la droite duale d'un point $a\in\plan$ 
(c'est-à-dire le projectivé
de l'espace des formes linéaires s'annulant sur $a$ vu comme droite de $V$),
et de façon similaire le point dual d'une droite $A$ est
$A^*=\project(\phi)\in\plan^*$ où $\phi\in \project(V^*)$
est n'importe quelle forme linéaire non nulle s'annulant sur tout
vecteur $\vec v$ tel que $\project(\vec v)\in A$.

%Si on note $A=\project(W)$ où $W$ est un plan vectoriel de $V$,
%$A_*$ est différent de $\project(W^*)$~: le premier est un point
%tandis que le second est une droite projective.

La dualité est involutive~: l'isomorphisme linéaire canonique
\begin{eqnarray*}
e :V &\to& V^{**} \\
\vec v &\mapsto& e_{\vec v} : \phi \mapsto \phi(\vec v)
\end{eqnarray*}
passe au quotient en un isomorphisme projectif 
$\project(e):\project(V)\to\project(V^{**})$
qui permet d'identifier le plan et son bidual. Étant donné 
$\alpha\in\project(V^*)$, on notera par exemple
$\project(e)^{-1}(\alpha^*)$ simplement $\alpha^*$. Cette écriture permet
ainsi d'avoir $a^{**}=a$.

\begin{defi}[repère dual]
Étant donné un repère $\mathscr{R}$ de $\project(V)$ et  une base adaptée 
$\vec{\mathscr{B}}$, on considère la base duale $\vec{\mathscr{B}}^*$, et on 
note
$\mathscr{R}^*$ le repère de $\project(V^*)$
qui lui est associé. Comme deux bases multiples l'une de l'autre ont leur 
bases duales 
également multiples l'une de l'autre, le choix de $\vec{\mathscr{B}}$ 
parmi les repères adaptés à $\mathscr{R}$ est indifférent. On dit que 
$\mathscr{R}^*$ est le \emph{repère dual} de $\mathscr{R}$.
\end{defi}

On rappelle que $\Delta=(p_1,p_2,p_3)$ est un triangle de $\plan$
dont on note $D_1=(p_2p_3)$, $D_2=(p_3p_1)$, $D_3=(p_1p_2)$ les côtés.
Si $p$ est un point de $\plan$ qui est générique, c'est-à-dire qui
n'est sur aucun des côtés de $\Delta$, alors $\mathscr{R}_p=(p_1,p_2,p_3,p)$
forme un repère. Notons $\mathscr{R}_p^*=(q_1,q_2,q_3,q)$ son repère dual ;
on a alors $q_i=D_i^*$ pour $i=1,2,3$.

En effet, identifiant un point et ses coordonnées, on a $p_1=[1:0:0]$,
$p_2=[0:1:0]$, $p_3=[0:0:1]$ et $p=[1:1:1]$.
De même, dans les coordonnées homogènes de $\plan^*$ définies
par le repère dual, on a $q_1=[1:0:0]$,
$q_2=[0:1:0]$, $q_3=[0:0:1]$ et $q=[1:1:1]$. De plus, vu la relation
entre une base et sa base duale, un point $[a:b:c]$ de $\plan^*$
correspond à la droite de $V^*$ engendrée par la forme linéaire
qui s'écrit $(x_1,x_2,x_3)\mapsto ax_1+bx_2+cx_3$
dans les coordonnées définies par une base adaptée à $\mathscr{R}_p$.
Ainsi, $q_1=\project(\phi_1)$ où $\phi_1$ est
une forme linéaire non nulle s'annulant sur les droites
vectorielles $p_2$ et $p_3$, ce qui montre que $q_1=(p_2p_3)^*=D_1^*$.

De même $q=\project(\phi)$ où $\phi$ est la forme linéaire sur $V$
définie par $\phi(x_1,x_2,x_3)=x_1+x_2+x_3$ dans les coordonnées 
données par une base adaptée à $(p_1,p_2,p_3,p)$.
Il n'est pas à priori évident
d'exprimer ce point indépendamment des coordonnées (qui ici
dépendent de $p$).

\begin{defi}[polaire repère]
Si $p$ est un point de $\plan$ qui est générique,
on appelle \emph{droite polaire repère} de $p$ par rapport à $\Delta$
et on note $p^\rep$ la droite $q^*\subset\plan$ où
$(q_1,q_2,q_3,q)$ est le repère dual de $(p_1,p_2,p_3,p)$. 

Si $D$ est une droite ne contenant aucun sommet de $\Delta$,
on appelle \emph{point polaire repère} de $D$ par rapport à
$\Delta$ et on note $D^\rep$ le point $D^{*\rep *}$ où
la dualité repère dans $\plan^*$ est définie par rapport
au triangle de sommets $(D_{1}^*,D_{2}^*,D_{3}^*)$.
\end{defi}

\begin{prop}
La polarité repère est involutive~: pour tout point générique
$p^{\rep\rep}=p$.
\end{prop}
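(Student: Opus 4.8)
L'idée est de dérouler les deux définitions — polaire repère d'un point, polaire repère d'une droite — et de ramener l'énoncé à deux faits élémentaires déjà établis : la dualité point-droite est involutive ($a^{**}=a$), et la base duale d'une base duale est la base de départ, vue à travers l'isomorphisme canonique $e\colon V\to V^{**}$. On pose $\mathscr{R}_p=(p_1,p_2,p_3,p)$ et on note $\mathscr{R}_p^*=(q_1,q_2,q_3,q)$ son repère dual, de sorte que $q_i=D_i^*$ et, par définition, $p^\rep=q^*$ est une droite de $\plan$.

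Je commencerais par vérifier que $D:=p^\rep$ est un argument admissible pour la polarité repère des droites : elle ne doit contenir aucun sommet de $\Delta$, et $D^*$ doit être générique par rapport au triangle $(D_1^*,D_2^*,D_3^*)$ de $\plan^*$. Les deux découlent du fait que $(q_1,q_2,q_3,q)$ est un repère de $\plan^*$ : le point $q$ n'appartient à aucune droite joignant deux des $q_i$ (ce qui donne directement la seconde condition), et la droite $q^*$ de $\plan$ évite $p_1,p_2,p_3$ — en effet, dans les coordonnées adaptées à $\mathscr{R}_p$, on a $q^*=\{[x_1:x_2:x_3]\mid x_1+x_2+x_3=0\}$, qui ne contient ni $[1:0:0]$, ni $[0:1:0]$, ni $[0:0:1]$. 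Je calculerais ensuite $D^\rep=D^{*\rep*}$ pas à pas. Comme $D=q^*$, l'involutivité donne $D^*=q$. Puis $D^{*\rep}=q^\rep$ se calcule dans $\plan^*$ relativement au triangle $(q_1,q_2,q_3)$ : on forme le repère $(q_1,q_2,q_3,q)$ de $\plan^*$ et on prend son repère dual dans $\project(V^{**})$. Or $(q_1,q_2,q_3,q)$ est par construction le repère dual $\mathscr{R}_p^*$, donc une base adaptée en est la base duale $\vec{\mathscr{B}}^*$ d'une base $\vec{\mathscr{B}}$ adaptée à $\mathscr{R}_p$, et la base duale de $\vec{\mathscr{B}}^*$ est $e(\vec{\mathscr{B}})$ ; ainsi le repère dual de $(q_1,q_2,q_3,q)$ est $\project(e)(\mathscr{R}_p)$, c'est-à-dire $\mathscr{R}_p=(p_1,p_2,p_3,p)$ après l'identification $\project(V^{**})\cong\plan$. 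Son quatrième point étant $p$, on obtient $q^\rep=p^*$, d'où $D^\rep=(q^\rep)^*=(p^*)^*=p$, c'est-à-dire $p^{\rep\rep}=p$.

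L'argument est purement formel ; le point délicat sera uniquement la gestion soigneuse des trois espaces emboîtés $V$, $V^*$, $V^{**}$ et l'usage systématique de $\project(e)$ pour replier $\plan^{**}$ sur $\plan$. Tout le contenu se concentre dans l'identité «~le repère dual du repère dual est le repère de départ~», et il s'agira surtout de l'invoquer avec la bonne identification et le bon ordre des points.
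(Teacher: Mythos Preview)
Ta démonstration est correcte. Elle emprunte toutefois une voie plus conceptuelle que celle de l'article. L'article procède entièrement en coordonnées : il calcule que $p^\rep$ a pour équation $x_1+x_2+x_3=0$ dans les coordonnées de $\mathscr{R}_p$, puis que $p^{\rep*\rep}$ a pour équation $x_1^*+x_2^*+x_3^*=0$ dans les coordonnées duales, et revient dans $\plan$ en résolvant $x_1^*x_1+x_2^*x_2+x_3^*x_3=0$ pour tous $x_i^*$ de somme nulle, ce qui donne $[1:1:1]=p$. Toi, tu invoques directement l'identité \emph{le repère dual du repère dual est le repère de départ}, qui se ramène au fait linéaire que la base duale de $\vec{\mathscr{B}}^*$ est $e(\vec{\mathscr{B}})$ ; le seul passage par des coordonnées dans ta preuve sert à vérifier la généricité de $p^\rep$, et il reprend d'ailleurs un calcul déjà fait dans l'article. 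Ton approche a l'avantage de mettre en évidence la raison structurelle de l'involutivité et de se transporter sans modification en dimension quelconque, tandis que celle de l'article est plus autonome (elle ne demande pas d'avoir bien en tête le jeu $V$, $V^*$, $V^{**}$) et fournit au passage l'équation explicite de $p^{\rep*\rep}$.
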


\begin{proof}
Le calcul simple précédent montre que $p^\rep$
a pour équation $x_1+x_2+x_3=0$ dans les coordonnées homogènes
$[x_1:x_2:x_3]$ définies par $\mathscr{R}_p$.

Notons $[x_1^*:x_2^*:x_3^*]$ les coordonnées
homogènes définies sur $\plan$ par le repère dual $\mathscr{R}_p^*$.
Par définition, ${p^\rep}^*=q$ donc $p^{\rep *\rep}$ est la droite
de $\plan^*$ d'équation $x_1^*+x_2^*+x_3^*=0$. Son point dual
a donc des coordonnées vérifiant $x_1^*x_1+x_2^*x_2+x_3^*x_3=0$
pour tous $x_i^*$ de somme nulle ; autrement dit
$p^{\rep *\rep *}=[1:1:1]=p$.
\end{proof}

La question suivante trouvera une réponse dans la prochaine section :
étant donnés $p$ et $\Delta$, peut-on construire $p^\rep$ à la règle ?

%%%%%%%%%%%%%%%%%%%%%%%%%%%%%%%%%%%%%%%%%%%%%%%%%%%%%%%%%%%%%%%%%%%%%%%%%%%%%
\section{Polarité harmonique}

La deuxième notion de polarité que nous abordons est traitée
par exemple dans le livre de Jean-Denis Eiden \cite{Eiden}.
Elle est spécialement intéressante ici car
facilement constructible à la règle.

%%%%%%%%%%%%
\subsection{Le cas de la droite~: division harmonique}

Commençons par le cas très simple de la droite projective. Notons que
l'hyperplan dual d'un point est encore un point~; plutôt qu'un triangle
on considère donc une paire de points $p_1,p_2$, et on cherche une 
opération dépendant de cette paire qui envoie un point différent 
de $p_1$ et $p_2$ sur un autre tel point.
\begin{defi}
Supposons que $\project(V)$ est une droite projective.
On appelle \emph{quatrième harmonique} d'un repère
$\mathscr{R}_p=(p_1,p_2,p)$ le point $p'$ tel que le birapport 
$(p_1,p_2,p,p')$ vaille $-1$.
\end{defi}

Remarquons qu'en notant $q_1=p_2^*$, $q_2=p_1^*$ et $q=p'^*$,
on a $\mathscr{R}_p^*=(q_1,q_2,q)$. En effet, 
en se plaçant dans les coordonnées homogènes $[x_1:x_2]$
définies par une base adaptée à $\mathscr{R}_p$ 
on a $p_1=[1:0]$, $p_2=[0:1]$ et $p=[1:1]$. Dans la carte affine
$(x_2=1)$ ceci s'écrit encore $p_1=\infty$, $p_2=0$ et $p=1$.
Alors $p'$ est définit par
\[ \left.\frac{p'-0}{p'-\infty} \middle/ \frac{1-0}{1-\infty} \right.=-1\]
qui se réécrit $p'=-1$ par la convention $(a-\infty)/(b-\infty)=1$.
Ainsi en coordonnées homogènes $p'=[-1:1]$.
Alors dans le repère $\mathscr{R}_p^*$, les coordonnées homogènes
de $q=p'^*$ vérifient $-x^*+y^*=0$, et $q=[1:1]$ comme souhaité.

%%%%%%%%%%%%
\subsection{Le cas du plan}

Dans le cas du plan, par intersection on obtient des triplets
de points sur chacun des côtés du triangle $\Delta$. Imitant
le cas de la dimension $1$, on peut alors construire trois nouveaux
points, qui engendrent une droite qu'on va donc définir
comme la polaire harmonique. 

\begin{defi}
Soit $\Delta=(p_1,p_2,p_3)$ un triangle d'un plan projectif $\plan$ et
$p$ un point générique.
Pour toute permutation $(i,j,k)$ de $(1,2,3)$, notons $u_i$ le
point d'intersection de $(p_i p)$ avec $(p_j p_k)$ et $u_i'$ le quatrième
harmonique du triplet $(p_j,p_k,u_i)$ (voir figure \ref{fig:harmonique1}).
Alors les trois points $u'_1$, $u'_2$ et $u'_3$ sont alignés, et la droite
qu'ils définissent est appelée la \emph{polaire harmonique}
de $p$ par rapport au triangle $\Delta$ et est notée $p^\#$.
\end{defi}

\begin{figure}\begin{center}
\input{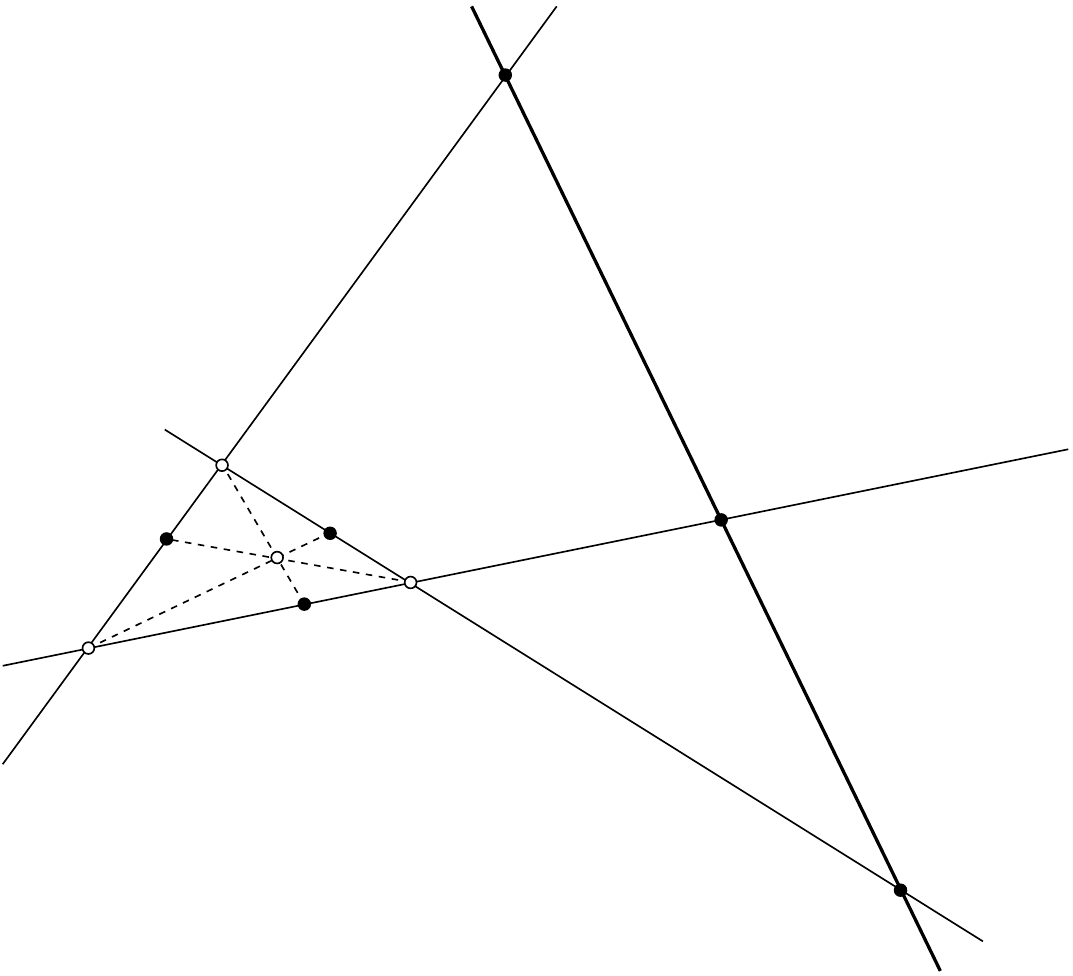_t}
\caption{Définition de la polaire harmonique $p^\#$ d'un point $p$.}
\label{fig:harmonique1}
\end{center}\end{figure}

Démontrons l'alignement annoncé. 
On se place dans les coordonnées $[x_1:x_2:x_3]$ et $[x_1^*:x_2^*:x_3^*]$
définies par les repères $\mathscr{R}_p=(p_1,p_2,p_3,p)$ et
$\mathscr{R}_p^*$.
Comme précédemment, pour toute permutation $(i,j,k)$ de $(1,2,3)$
$p_i$ a pour coordonnées $x_j=x_k=0$ et $x_i=1$,
et $p=[1:1:1]$. La droite d'équation $(x_i=0)$ contient
$p_j$ et $p_k$, c'est donc la droite $D_i$. De même,
$(p_ip)$ a pour équation $x_j=x_k$, de sorte que leur intersection
$u_i$ a pour coordonnées $x_i=0,x_j=x_k$ (par exemple on peut prendre
$x_j=x_k=1$). On en déduit 
que les coordonnées de $u'_i$ sont $x_i=0,x_j=-1,x_k=1$.

Les trois points $u'_i$ obtenus sont sur la droite $D$ d'équation 
$x_1+x_2+x_3=0$ donc sont bien alignés.
De plus, on remarque que cette droite est égale à $p^\rep$.

\begin{prop}\label{prop:harmnoique}
Pour tout triangle $\Delta$ et tout point générique $p$,
on a $p^\rep=p^\#$.
\end{prop}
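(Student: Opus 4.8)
Le cœur de la preuve est déjà contenu dans les calculs qui précèdent l'énoncé ; il ne reste qu'à en assembler les conclusions. On travaille dans les coordonnées homogènes $[x_1:x_2:x_3]$ associées au repère $\mathscr{R}_p=(p_1,p_2,p_3,p)$, dans lesquelles $p_1=[1:0:0]$, $p_2=[0:1:0]$, $p_3=[0:0:1]$ et $p=[1:1:1]$. D'une part, on a vu en étudiant la polarité repère que $p^\rep=q^*$ où $q$ est le point du repère dual correspondant à la forme linéaire $(x_1,x_2,x_3)\mapsto x_1+x_2+x_3$ ; autrement dit $p^\rep$ est la droite d'équation $x_1+x_2+x_3=0$ dans ces mêmes coordonnées.

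D'autre part, le calcul fait juste avant la proposition détermine les points intervenant dans la construction harmonique : pour toute permutation $(i,j,k)$ de $(1,2,3)$, le côté $D_i$ a pour équation $(x_i=0)$, la droite $(p_ip)$ a pour équation $(x_j=x_k)$, d'où $u_i$ de coordonnées $x_i=0$, $x_j=x_k=1$. Le quatrième harmonique $u_i'$ du triplet $(p_j,p_k,u_i)$ se calcule entièrement dans la droite projective $D_i$, munie du repère que lui fournissent $p_j$, $p_k$ et $u_i$ ; ce calcul reproduit celui déjà effectué pour la division harmonique sur une droite, et donne $u_i'$ de coordonnées $x_i=0$, $x_j=-1$, $x_k=1$. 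On constate alors que chacun des trois points $u_1'$, $u_2'$, $u_3'$ vérifie $x_1+x_2+x_3=0$ : ils sont donc alignés (ce qui redonne au passage l'alignement figurant dans la définition) sur la droite d'équation $x_1+x_2+x_3=0$, c'est-à-dire sur $p^\rep$. Puisque $p^\#$ est par définition la droite portant $u_1'$, $u_2'$ et $u_3'$, on conclut $p^\#=p^\rep$.

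La preuve ne présente aucune difficulté sérieuse : le seul point demandant un peu d'attention est que le quatrième harmonique $u_i'$ doit être calculé dans la droite $D_i$ et non dans le plan, mais cela se ramène littéralement au calcul déjà fait pour la droite projective. Si l'on souhaitait éviter les coordonnées, on pourrait aussi remarquer que les deux polarités sont équivariantes sous le stabilisateur de $\Delta$ dans $\mathrm{PGL}(V)$ — le tore des matrices diagonales modulo les scalaires — lequel agit transitivement sur les points génériques, de sorte qu'il suffirait de vérifier $p^\#=p^\rep$ pour un seul point générique bien choisi ; mais le calcul direct est plus court et déjà essentiellement rédigé.
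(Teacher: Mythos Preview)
Your proof is correct and follows exactly the paper's own approach: the argument is entirely contained in the coordinate computation preceding the proposition, and you have faithfully assembled it by identifying both $p^\rep$ and the line through the $u_i'$ with the line of equation $x_1+x_2+x_3=0$. The additional remark on torus equivariance is a pleasant aside but, as you note yourself, not needed.
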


On définit le polaire harmonique d'une droite générique $D$
par $D^\#=D^{*\# *}$, de sorte que par la proposition \ref{prop:harmnoique}
et l'involutivité de $\null^\rep$, le point $D^\#$ se construit de la
façon suivante. On considère les
points d'intersections $u'_i$ de $D$ avec $D_i$, et les
quatrièmes harmoniques $u_i$ de $(p_j,p_k,u'_i)$. Alors
les droites $(p_i u_i)$ sont concourantes en $D^\#$.

Par ailleurs, on sait que dans un plan on peut 
construire à la règle le quatrième harmonique de trois
points alignés (voir la figure \ref{fig:construction}).
Comme $(p_j,p_k,u_i,u'_i)$ sont en division harmonique si
et seulement si $(p_j,p_k,u'_i,u_i)$ le sont, on obtient
une autre construction du quatrième harmonique~: partant 
de $(p_j,p_k,u'_i)$, on construit les droites de la figure dans l'ordre
4,1,3,2 en ayant choisi des points $m$ et $n\in (u'_im)$
pour obtenir $u_i$.

\begin{figure}[htp]\begin{center}
\input{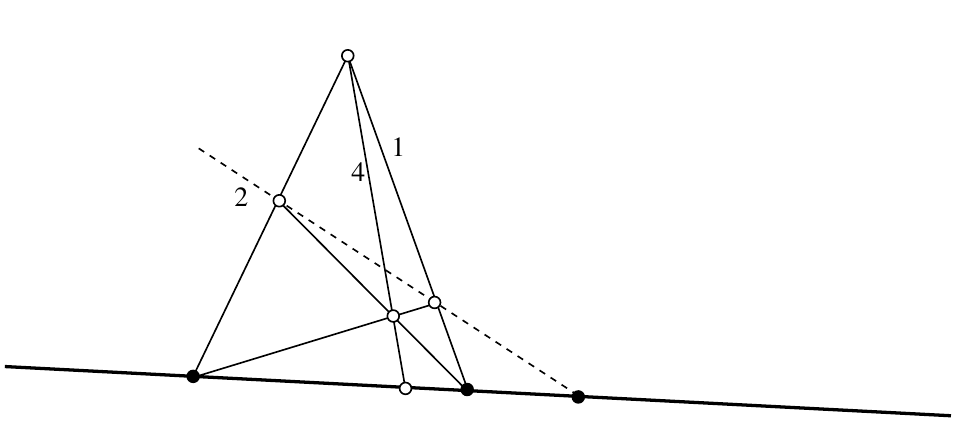_t}
\caption{Construction à la règle du quatrième harmonique $u'_i$ de 
$(p_j,p_k,u_i)$. Les numéros des droites indiquent l'ordre
de constuction. Le point $m$ et la droite en pointillés sont
arbitraires, cette dernière devant toutefois passer par
$u_i$.}\label{fig:construction}
\end{center}\end{figure}

Dans notre cas, plutôt que de prendre $m$ et $n$ arbitraires on peut
utiliser $p_i$ et $p$, et on constate que $u'_i$ est l'intersection
de $(p_jp_k)$ avec $(u_ju_k)$. L'alignement démontré en coordonnées
plus haut n'est donc rien d'autre que le théorème de Desargues
pour les triangles $(p_1,p_2,p_3)$ et $(u_1,u_2,u_3)$ et le point
$p$. La construction qu'on a donnée a l'avantage de mettre en
évidence le lien avec la dimension $1$.

Une dernière remarque amusante~: si $\mathbb{K}$ est de caractéristique
$2$, on a $-1=1$ donc $u'_i=u_i$, et les points $(u_1,u_2,u_3)$ sont
alignés. On reconnait dans la configuration $(p_1,p_2,p_3,u_1,u_2,u_3,p)$
le plan de Fano~; dans le cas $\mathbb{K}=\mathbb{F}_2$, ces $7$
points sont tous les points de $\plan$.

%%%%%%%%%%%%%%%%%%%%%%%%%%%%%%%%%%%%%%%%%%%%%%%%%%%%%%%%%%%%%%%%%%%%%%%%%%%%%
\section{Polarités cubiques}

Dans cette section on s'intéresse à une polarité plus algébrique, qui généralise
celle, classique, de la polarité par rapport à une conique. Pour des 
raisons qui s'éclairciront, on suppose dans toute cette partie que le corps 
$\bK$ est de caractéristique différente de $2$ et $3$. On se place sur un plan 
projectif dans des coordonnées homogènes $[x_1:x_2:x_3]$.

%%%%%%%%%%%
\subsection{Polarité par rapport à une conique}

Par {\it conique} on entend ici la donnée d'une forme quadratique $Q$
%$$Q(x,y,z)=ax^2+by^2+cxy+dxz+eyz+fz^2$$
sur l'espace vectoriel $V$ à multiplication par un scalaire non nul près.
Parfois, on considère que la conique est le projectivé du cône isotrope de 
$Q$, et dans beaucoup de cas ce cône détermine $Q$ à un facteur près.

La forme $Q$ admet (en caractéristique différente de $2$)
une forme polaire $\varphi$, définie comme l'unique forme bilinéaire 
symétrique telle que $\varphi(\vec u,\vec u)=Q(\vec u)$ pour tout $\vec u\in V$. 
Ceci découle de l'identité de polarisation
\[2\varphi(\vec u,\vec v)= Q(\vec u+\vec v)-Q(\vec u)-Q(\vec v).\]
Multiplier $Q$ par un scalaire amène le même changement pour $\phi$,
de sorte que la donnée d'une conique détermine la forme polaire à constante
près.

La contraction qui à $\vec u$ associe la forme linéaire $\varphi(\vec u,\cdot)$ 
est une application linéaire de $V$ dans $V^*$. On définit la 
{\it polaire} (par rapport à la conique donnée) d'un point $p=\project(\vec u)$
qui n'est pas dans le noyau de $\varphi$ comme le projectivé 
de l'orthogonal pour $Q$ de $\vec u$, autrement dit l'ensemble
\[p^\perp := \{\project(\vec v)\,;\,\varphi(\vec u,\vec v)=0\}.\] 
Comme $\vec u$ n'est pas dans le noyau de $\varphi$, cet ensemble est le 
projectivé d'un plan de $V$, donc une droite projective. Dans le cas où
$Q$ est non-dégénérée, le noyau de $\varphi$ est réduit à $\vec 0$ donc
la droite polaire de tous les points de $\project(V)$ est définie. Dans ce cas,
l'application $\vec u\mapsto \varphi(\vec u,\cdot)$ est un isomorphisme de
$V$ vers $V^*$ donc l'application $p\mapsto p^\perp$ est un isomorphisme
de $\project(V)$ vers $\project(V^*)$.

De même, le point polaire d'une droite $D=\project(W)$
est le point correspondant à l'orthogonal pour $Q$ de $W$.
Ceci revient à composer la dualité et l'identification entre $\project(V)$
et $\project(V^*)$ donnée par la contraction.

%%%%%%%%%%%
\subsection{Le degré $3$}

Il est injustement peu connu que tout le paragraphe précédent se généralise en
degré supérieur. Une {\it cubique} est la donnée d'une forme homogène $C$ de 
degré $3$ (ou forme cubique) sur $V$
à multiplication par un scalaire non nul près. On confond parfois une forme
cubique avec l'ensemble des points $p=\project(\vec u)$ pour lesquels $C(\vec u)=0$.

Comme une forme quadratique, $C$ admet une {\it forme polaire}~:
l'unique forme trilinéaire symétrique
$\psi:V\times V\times V\to \bK$ telle que 
$\psi(\vec u,\vec u,\vec u)=C(\vec u)$ pour tout $\vec u\in V$.
En effet, un calcul direct montre qu'une telle forme doit nécessairement
satisfaire l'identité de polarisation
\begin{align*}
6\psi(\vec u,\vec v,\vec w)= &\ C(\vec u+\vec v+\vec w)-C(\vec u+\vec v)-
  C(\vec u+\vec w)\\
  &-C(\vec v+\vec w)+ C(\vec u)+C(\vec v)+C(\vec w),
\end{align*}
ce qui montre l'unicité. L'existence se montre facilement
dans le cas où $C$ est un monôme~: 
\[(\vec u,\vec v, \vec w)\mapsto u_i v_i w_i\]
est la forme polaire de $\vec u\mapsto u_i^3$~; 
\[(\vec u,\vec v, \vec w)\mapsto \frac13 (u_iv_iw_j+ u_iv_jw_i+u_jv_iw_i)\]
est celle de $\vec u\mapsto u_i^2u_j$, etc. Par combinaison linéaire,
on en déduit l'existence de la forme polaire dans le cas général.

On dispose maintenant de deux contractions~:
l'application linéaire $\vec u\mapsto \psi(\vec u,\cdot,\cdot)$
de $V$ dans l'espace $\mathrm{S}^2(V,\bK)$ des formes bilinéaires symétriques 
sur $V$ et la forme bilinéaire symétrique
$(\vec u,\vec v)\mapsto \psi(\vec u,\vec v,\cdot)$ à valeur dans $V^*$. On 
appelle ces applications la première et la deuxième contraction de $\psi$.

On définit trois niveau d'annulation de $\psi$. Son \emph{premier noyau}
(ou simplement \emph{noyau}) est le noyau de sa première contraction,
autrement dit l'ensemble
\[\ker\psi := \{\vec u\in V\,;\, \psi(\vec u,\vec v,\vec w)=0\quad
  \forall\vec v,\vec w\in V\}~;\]
c'est un sous-espace vectoriel de $V$. 
Son \emph{second noyau} est l'ensemble des vecteurs qui, contractés deux
fois avec $\psi$, l'annulent~:
\[\ker^2\psi := \{\vec u\in V\,;\, \psi(\vec u,\vec u, \vec v)=0\quad
  \forall \vec v\in V\}~;\]
c'est l'intersection de trois coniques (éventuellement dégénérées) et
il contient le premier noyau.
Enfin, son \emph{troisième noyau} (ou encore \emph{cone isotrope}) est l'ensemble
des vecteurs qui, contractés trois fois avec $\psi$, l'annulent~:
\[\ker^3\psi := \{\vec u\in V\,;\, \psi(\vec u,\vec u,\vec u)=0\}~;\]
le cône isotrope contient donc le second noyau et son projectivé est
l'ensemble des points de la cubique donnée. 

Si $p=\project(\vec u)$ est tel que $\vec u\notin \ker\psi$, on définit
sa \emph{première polaire} comme la conique définie par
$Q(\vec v)=\psi(\vec u,\vec v,\vec v)$ et on la note $p'$.
Si de plus $\vec u\notin\ker^2\psi$, on définit la \emph{seconde polaire} de
$p$ comme la droite définie par la forme linéaire $\psi(\vec u,\vec u,\cdot)$
et on la note $p''$. Notons que $p''$ est également la droite polaire
de $p$ par rapport à la conique $p'$.

L'application qui à un point associe sa première polaire
est loin d'être surjective, puisque dans le meilleur des cas (quand le noyau
de $\psi$ est vide) c'est un plongement projectif du plan $\plan$ dans l'espace
des coniques de $V$, qui est de dimension $5$. On ne définit donc pas de
réciproque à la première polarité. L'application de seconde polarité n'est en
général pas surjective non plus, mais on pourrait vu les dimensions en jeu 
définir une réciproque sur une partie de $\project(V^*)$
(identifié à l'ensemble des droites de $\project(V)$). On ne le fera ici
que dans un cas particulier.

%%%%%%%%%%%%%%%
\subsection{Le cas d'un triangle}

Venons-en au lien qu'entretien la polarité algébrique avec 
la dualité des repères. On considère toujours un triangle
$\Delta=(p_1,p_2,p_3)$, un point générique $p$ et une droite générique
$D$.

\begin{prop}
Soit $C$ la cubique réunion des trois droites $D_3=(p_1p_2)$, 
$D_2=(p_1p_3)$ et $D_1=(p_2p_3)$.
Son premier noyau est réduit à $0$ et son second noyau a 
pour projectivé l'union des trois sommets $\{p_1,p_2,p_3\}$,
de sorte que les deux polaires de
$p$ par rapport à $C$ sont bien définies.

De plus la seconde polarité est bijective depuis l'ensemble
des points distincts des sommets, vers les droites distinctes
des côtés.

On note $p^\perp$ la seconde polaire de $p$ par rapport à
$C$, et $D^\perp$ le point dont la seconde polaire est
$D$, et on les appelles les polaires cubiques de $p$ et $D$
par rapport à $\Delta$. On a alors $p^\perp=p^\rep$
et $D^\perp=D^\rep$.
\end{prop}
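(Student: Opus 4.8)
Le plan est de tout mener par un calcul en coordonnées homogènes adaptées au triangle $\Delta$. Prenons les coordonnées $[x_1:x_2:x_3]$ associées à une base telle que $p_i$ soit le $i$-ème point de base, de sorte que chaque côté $D_i=(p_jp_k)$ soit la droite $(x_i=0)$. Une forme cubique qui s'annule sur $D_1\cup D_2\cup D_3$ est divisible par $x_1$, par $x_2$ et par $x_3$, donc proportionnelle à $x_1x_2x_3$~; on peut prendre $C(\vec u)=u_1u_2u_3$. C'est le seul endroit où l'hypothèse sur la caractéristique intervient~: la forme polaire de $C$ est
\[ \psi(\vec u,\vec v,\vec w)=\frac{1}{6}\sum u_iv_jw_k, \]
la somme portant sur les six permutations $(i,j,k)$ de $(1,2,3)$, le $6$ devant être inversible dans $\bK$. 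On utilisera seulement la forme développée
\[ \psi(\vec u,\vec v,\vec w)=\frac{1}{6}\bigl(u_1(v_2w_3+v_3w_2)+u_2(v_1w_3+v_3w_1)+u_3(v_1w_2+v_2w_1)\bigr) \]
et, pour $\vec v=\vec u$, la forme linéaire $\psi(\vec u,\vec u,\cdot)$ de coordonnées $\frac{1}{3}(u_2u_3,u_3u_1,u_1u_2)$.

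De la forme développée, $\psi(\vec u,\cdot,\cdot)$ n'est la forme bilinéaire nulle que si $u_1=u_2=u_3=0$, donc $\ker\psi=\{0\}$. Ensuite, $\vec u\in\ker^2\psi$ si et seulement si $u_2u_3=u_3u_1=u_1u_2=0$, c'est-à-dire si et seulement si au moins deux coordonnées de $\vec u$ s'annulent, c'est-à-dire si et seulement si $\project(\vec u)$ est l'un des trois sommets~; ainsi $\ker\psi\subset\ker^2\psi$ et $\project(\ker^2\psi)=\{p_1,p_2,p_3\}$, et pour tout point générique $p=[a:b:c]$ (donc $abc\neq 0$) les deux polaires sont bien définies. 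La seconde polaire $p^\perp$ est la droite d'équation $bc\,x_1+ca\,x_2+ab\,x_3=0$, soit, en divisant par $abc$, la droite d'équation $x_1/a+x_2/b+x_3/c=0$.

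Il reste à relier cette formule à la polaire repère. On sait que $p^\rep$ a pour équation $x_1+x_2+x_3=0$ dans les coordonnées du repère $\mathscr{R}_p=(p_1,p_2,p_3,p)$~; or une base adaptée à $\mathscr{R}_p$ s'obtient en multipliant les vecteurs de la base de départ respectivement par $a$, $b$, $c$, de sorte que les coordonnées $y_i$ dans $\mathscr{R}_p$ et $x_i$ dans $\Delta$ sont liées par $x_1=ay_1$, $x_2=by_2$, $x_3=cy_3$~; donc dans les coordonnées adaptées à $\Delta$, $p^\rep$ a aussi pour équation $x_1/a+x_2/b+x_3/c=0$. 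On a ainsi $p^\perp=p^\rep$ pour tout point générique $p$, et, dans ces coordonnées, la seconde polarité comme la polarité repère sont, sur les objets génériques, données par le passage aux inverses des coordonnées $[t_1:t_2:t_3]\mapsto[t_1^{-1}:t_2^{-1}:t_3^{-1}]$ (d'un point vers une droite ou d'une droite vers un point)~; cette involution échange points génériques et droites génériques. Par ailleurs, un point $\project(\vec u)$ situé sur un côté mais distinct des deux sommets qu'il porte (disons $u_1=0$, $u_2,u_3\neq 0$) a encore une seconde polaire, à savoir ce côté lui-même puisque $\vec u\notin\ker^2\psi$, qui n'est pas une droite générique~; ainsi, pour $D$ générique, l'unique point non-sommet de seconde polaire $D$ est générique, ce qui donne un sens à $D^\perp$. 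Enfin, si $D=[\alpha:\beta:\gamma]$ est générique, ce point est $q=[\alpha^{-1}:\beta^{-1}:\gamma^{-1}]$, qui est aussi $D^\rep$ car $D=q^\rep$ et la polarité repère est involutive~; d'où $D^\perp=D^\rep$.

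Je ne vois pas d'obstacle véritable~: l'essentiel se réduit à reconnaître la forme polaire de $x_1x_2x_3$, puis à lire deux conditions d'annulation de coordonnées. Les seuls points méritant de l'attention sont l'existence même de la forme polaire, qui force la caractéristique $\neq 2,3$, et la formulation soignée de l'énoncé de bijectivité, dont le domaine et l'image doivent être restreints aux objets génériques.
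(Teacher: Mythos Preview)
Your argument is correct and follows the same route as the paper: compute the polar form of $x_1x_2x_3$ in adapted homogeneous coordinates and read off the kernels and the second polar. The only difference is that the paper works directly in the coordinates of $\mathscr{R}_p$, so that $p=[1:1:1]$ and $p^\perp$ comes out as $x_1+x_2+x_3=0=p^\rep$ without your change-of-basis step; in exchange, your version with $p=[a:b:c]$ makes the Cremona involution $[t_i]\mapsto[t_i^{-1}]$ visible and your treatment of the bijectivity claim is sharper than the paper's one-line ``par dualité'' --- you correctly observe that every non-vertex point of a side $D_i$ has $D_i$ itself as second polar, so the genuine bijection is between \emph{generic} points and \emph{generic} lines rather than between all non-vertex points and all non-side lines.
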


En particulier, on voit que la polarité $p\mapsto p^\rep$ s'étend
aux côtés de $\Delta$, mais pas à ses sommets. Remarquons qu'il
découle de cette proposition que $D^\perp=D^{*\perp *}$.

\begin{proof}
On se place dans les coordonnées homogènes $[x_1:x_2:x_3]$ définies
par $\mathscr{R}_p$.

Comme la droite $D_k=(p_ip_j)$ a pour équation $x_k=0$,
la cubique $C$ a pour équation $x_1x_2x_3=0$ et pour forme polaire,
à un facteur
$6$ près,
\[\psi\left(\left|\begin{array}{c} x_1\\x_2\\x_3\end{array}\right.,
  \left|\begin{array}{c} y_1\\y_2\\y_3\end{array}\right.,
  \left|\begin{array}{c} z_1\\z_2\\z_3\end{array}\right. \right)
  = \sum_{(i,j,k)} x_i y_j z_k \]
où $(i,j,k)$ parcourt les permutations de $(1,2,3)$.
Autrement dit, si on identifie $(\mathbb{K}^3)^3$ à l'ensemble
des matrices $3\times 3$, $\psi$ est (à un facteur près) le \emph{permanent}.

Le premier noyau s'en déduit immédiatement, et on voit que le second
a pour système d'équations
\[\left\{ \begin{matrix} x_1x_2 = 0 \\ x_1x_3 = 0 \\ x_2x_3 =0
\end{matrix}
\right.\]
donc est réduit aux points ayant deux coordonnées nulles, c'est-à-dire
aux sommets de $\Delta$.

En entrant deux fois les coordonnées $[1:1:1]$ de $p$
dans l'expression de $\psi$ on voit que sa seconde polaire 
par rapport à $C$ a pour équation
$x_1+x_2+x_3 = 0$. Ainsi, $p^\perp=p^\rep$.
Le cas de $D$ s'en déduit par dualité. 
\end{proof}

%%%%%%%%%%%
\subsection{Détour par la transformation de Crémona}

La première polaire de $p$ a pour équation $x_1x_2+x_1x_3+x_2x_3=0$,
donc passe par $p_1$, $p_2$ et $p_3$ ; de plus elle est tangente
en $p_i$ à la droite d'équation $x_j+x_k=0$, qui
passe par le quatrième harmonique sur $(p_jp_k)$ des points
$p_j$, $p_k$ et $(p_jp_k)\cap(p_ip)$.
Ainsi, si $\bK=\bR$,
dans une carte affine telle que les points $p_1,p_2,p_3$ forment un triangle 
équilatéral
dont $p$ est le centre de gravité, cette conique est
le cercle circonscrit au triangle (voir figure \ref{fig:construction2}).

\begin{figure}[htp]\begin{center}
\input{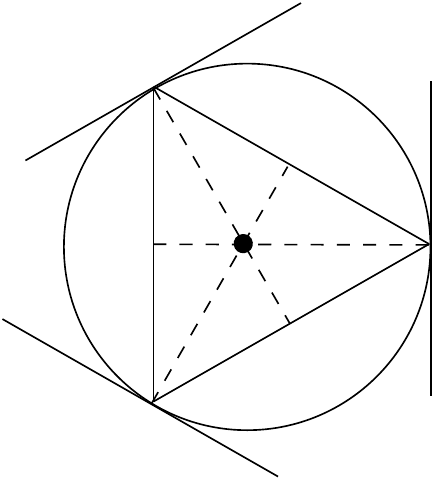_t}
\caption{Conique polaire de $p$ par rapport au triangle $(p_1,p_2,p_3)$~: les
         quatrièmes harmoniques sont à l'infini.}\label{fig:construction2}
\end{center}\end{figure}

On appelle alors la première polaire de $p$ l'\emph{ellipse circonscrite}
à $\Delta$ de {\it centre} $p$. Fixons $p$ et notons $E$ cette conique.
Si l'on considère l'application $\Phi$ composée de la dernière polarité par 
rapport à $\Delta$ et de la polarité par rapport à $E$ (cette dernière étant 
simplement destinée à manipuler des points plutôt que des droites),
on peut vérifier qu'elle s'écrit dans les coordonnées définies par
$\mathscr{R}_p$ :
\[\Phi([x_1:x_2:x_3])=[x_2x_3:x_1x_3:x_1x_2].\]
C'est donc une transformation rationnelle (c'est-à-dire, à coefficients
polynomiaux homogènes de même degré), et elle est bien définie
hors des sommets de $\Delta$~; elle est appelée transformation de Crémona
et c'est une involution (elle est en particulier d'inverse rationnel).
L'application $\Phi$ décrit comment dégénère la polarité par rapport
à $\Delta$ au bord de son domaine de définition ; par exemple elle envoie
$D_i$ sur $p_i$, et si $q_n\to p_i$ avec une direction asymptotique
$(p_i q_n)\to D$, alors $\Phi(q_n)$ tends vers un point de $D_i$.

%%%%%%%%%%%%%%%%%%%%%%%%%%%%%%%%%%%%%%%%%%%%%%%%%%%%%%%%%%%%%%%%%%%%%%%%%%%%%
\section{Polarité convexe}

Dans cette section, $\mathbb{K}=\mathbb{R}$. Un \emph{convexe} de $\plan$
est une partie dont l'intersection avec toute droite projective est
connexe. Un convexe $C$ est \emph{propre} s'il existe une droite qui ne rencontre
pas son adhérence; autrement dit, si $C$ est relativement compact dans une carte
affine. La polarité convexe est facile à définir pour les droites.

\begin{defi}
Soit $C$ un convexe ouvert propre de $\plan$ et $D$ une droite projective
ne rencontrant pas $\bar C$. On appelle point polaire de $D$
par rapport à $C$ et on note $D^\circ$ le barycentre de $C$ dans 
n'importe quelle carte affine envoyant $D$ à l'infini.
\end{defi}

Cette définition est cohérente car la notion de barycentre est affinement
équivariante et que deux cartes affines ayant la même droite à l'infini
diffèrent par une transformation affine. 

La définition de la droite polaire d'un point est alors obtenue par dualité.
\begin{defi}
Le convexe dual $C^*$ de $C$ est défini
comme l'ensemble des droites de $\plan$ qui ne rencontrent pas
$\bar C$. C'est un convexe ouvert propre de $\plan^*$.
Étant donné un point $x$ de $C$, la droite $x^*$ de $\plan^*$ ne rencontre pas l'adhérence
de $C^*$, donc on peut définir son point polaire convexe $x^{*\circ}$
(par rapport à $C^*$). On appelle alors polaire convexe
de $x$ par rapport à $C$ la droite $x^\circ:=x^{*\circ *}$ de $\plan$~; elle
ne rencontre pas l'adhérence de $\plan$.
\end{defi}

Cette définition, très naturelle, n'est toutefois pas involutive :
on n'a en général pas $x=x^{\circ\circ}$ !
Yves Benoist a en effet remarqué ceci dans le cas d'un quadrilatère \cite{Benoist}.

Avant de traiter le cas d'un triangle, nous allons déterminer par des outils
affines l'inverse de l'application $D\to D^\circ$.

%%%%%%%%%%%
\subsection{Convexes affines, dualité et point de Santaló}

On se place ici en dimension quelconque car les notations et idées ne
s'en trouvent pas alourdies pour autant. L'essentiel de ce qui suit
peut être consulté et complété dans \cite{Faraut-Koranyi}.

Considérons un ouvert convexe relativement compact $K$ de l'espace affine
$\mathbb{R}^n$. On définit pour chaque point $x\in K$ le dual
de $K$ centré en $x$ de la façon suivante :
\[K^x = \left\{f\in(\mathbb{R}^n)^* \,\middle|\, \forall y\in \bar K\quad 
f(\vec{xy})>-1 \right\}.\]
C'est un ouvert convexe relativement compact du dual de $\mathbb{R}^n$ qui
contient l'origine $o$ dans son intérieur. Notons qu'il est courant de considérer
plutôt $-K^x$, mais la définition ci-dessus simplifie légèrement ce qui suit.

Le \emph{point de Santaló} de $K$ est défini comme l'unique point $x\in K$
minimisant le volume de $K^x$. Voyons comment l'introduction d'un cône convexe
et de sa \emph{fonction caractéristique} permet à la fois de montrer l'existence
et l'unicité du point de Santaló et de construire l'inverse de la polarité convexe.

%%%
\subsubsection{Cônes convexes}

\begin{figure}\begin{center}
\input{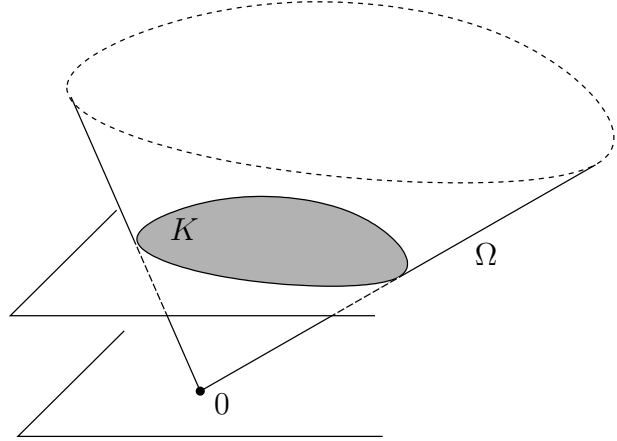}
\caption{Le cône vectoriel $\Omega$ associé à un convexe $K$.}
\label{fig:cone}
\end{center}\end{figure}

On identifie $\mathbb{R}^n$ avec l'hyperplan affine $H$ d'équation $(t=1)$
de $\mathbb{R}^{n+1}$ dont les coordonnées canoniques sont notées $(x_1,\ldots,x_n,t)$,
et on considère le cône convexe s'appuyant sur $K$ (voir figure \ref{fig:cone}) :
\[\Omega := \left\{(x,t)\in\mathbb{R}^{n+1} \,\middle|\, t>0 \mbox{ et } x/t\in K\right\}.\]
Le \emph{dual} de $\Omega$ est le cône convexe de $(\mathbb{R}^{n+1})^*$ défini par 
\[\Omega^* := \left\{f\in(\mathbb{R}^{n+1})^* \,\middle|\, 
  f(y)>0\quad\forall y\in \bar\Omega\setminus\{0\}\right\}\]
de sorte que $\project(\Omega^*)\subset\project((\mathbb{R}^{n+1})^*)$ est le convexe dual de
$\project(\Omega)\subset\project(\mathbb{R}^{n+1})$.

La donnée de $\Omega$ et $H$ donne par dualité $\Omega^*$ et un point
$\tau=\tau^H\in(\mathbb{R}^{n+1})^*$ défini par $\tau(x,t)=t$, mais ne fournit
pas d'hyperplan affine naturel ; c'est là que le choix d'un point dans
$K$ intervient.

En effet, étant donné $x\in K$ on peut définir un hyperplan affine par
\[H^x:=\left\{f\in(\mathbb{R}^{n+1})^* \,\middle|\, f(x,1)=1\right\}.\]
Ainsi, la donnée de $(\Omega,H,x)$ donne par dualité une donnée de même
nature $(\Omega^*,H^x,\tau^H)$ dans le dual. De plus, si on note
$\pi=\pi_H^x$ la projection linéaire sur la direction $\vec H$
de $H$ dans la direction de $\vec{ox}$, l'application
\[f\in K^x \mapsto \tilde f=f\circ\pi+\tau\]
identifie $K^x$ à $\Omega^*\cap H^x$. Ceci permet de lever la difficulté
liée à la dépendance de $K^x$ en $x$.

%%%
\subsubsection{Fonction caractéristique}

La fonction caractéristique d'un cône convexe est la fonction
$\varphi:\Omega\to \mathbb{R}$ définie par
\[\varphi(x):=\int_{\Omega^*} e^{-f(x)} df\]
où la mesure est la mesure de Lebesgue ; $\mathbb{R}^{n+1}$ et son dual
sont ici munis de leurs produits scalaires canoniques, mais ceci n'a pas
grande importance puisque la multiplication de $\varphi$ par une constante
ne change pas ce qui suit.

Tout d'abord, on montre que $\varphi$ est strictement log-convexe.
En effet, si $u$ est un vecteur non nul de $\mathbb{R}^{n+1}$, on obtient
pour $D^2_{uu}\log\varphi(x)$ l'expression
\[\frac1{\varphi(x)^2}
  \left(\int_{\Omega^*} e^{-f(x)}df\int_{\Omega^*} f(u)^2e^{-f(x)} df
  -\left(\int_{\Omega^*}f(u)e^{-f(x)} df\right)^2\right)\]
et il suffit d'écrire le dernier intégrande sous la forme
$e^{-f(x)/2}\, f(u)e^{-f(x)/2}$ pour que l'inégalité de Cauchy-Schwarz assure
$D^2_{uu}\log\varphi(x)>0$.

La fonction caractéristique permet aussi de définir une application
de $\Omega$ dans $\Omega^*$ par $\theta_\Omega(x):= -d\log\varphi(x)$.
En effet, en dérivant sous l'intégrale on voit que
\[\theta_\Omega(x)=\frac{\int_{\Omega^*} f e^{-f(x)}df}{\int_{\Omega^*}e^{-f(x)}df}\]
de sorte que $\theta_\Omega(x)(y)>0$ pour tout $y\in\bar\Omega\setminus\{0\}$.

Montrons maintenant que $\theta_\Omega$ est bijective. Elle est injective
par stricte log-convexité de $\varphi$. Soit $g\in\Omega^*$ et considérons
$H^{*g}$ l'hyperplan de $\mathbb{R}^{n+1}$ d'équation $g(x)=1$.
Par convexité, et comme $\varphi$ tend vers l'infini
au bord de $\Omega$,\footnote{Voir par exemple la proposition I.3.2 de \cite{Faraut-Koranyi}}
il existe un unique point $y$ de $\Omega\cap H^{*g}$ qui y 
minimize $\varphi$. Alors $d\log\varphi(y)$ a pour noyau la direction
de $H^{*g}$, de sorte que $\theta_\Omega(y)$ est un multiple
de $g$. Or par changement de variable, $\varphi$ est homogène de degré $-(n+1)$ donc
on a $[d\varphi(y)](y)=-(n+1)\varphi(y)$, et $\theta_\Omega(y)=(n+1)g$.
Mais $\theta_\Omega$ est homogène de degré $-1$, donc $\theta_\Omega((n+1)y)=g$. 

%%%%
\subsubsection{Conclusion}

Relions maintenant la fonction caractéristique et le volume de $K^x$.
Il suffit pour cela d'écrire $\Omega^*$ comme la réunion disjointe des
$t K^x=\Omega^*\cap tH^x$ et d'appliquer le théroème de Fubini :
\begin{eqnarray}
\varphi(x)&=&\int_0^{+\infty} \int_{tK^x} e^{-f(x)} df \,dt\\
  &=&\int_0^{+\infty} \int_{K^x} e^{-tg(x)} t^n dg \,dt\\
  &=&\mathrm{vol}(K^x)\int_0^{+\infty} e^{-t} t^n dt.
\end{eqnarray}
Comme 
$\int_0^{+\infty} t^{n+1}e^{-t}dt=(n+1)\int_0^{+\infty} t^ne^{-t} \,dt$,
on trouve par le même argument
\[\theta_\Omega(x)=\frac{n+1}{\mathrm{vol}(K^x)}\int_{K^x} g \,dg.\]
Ainsi, $\theta_\Omega(x)$ est toujours le centre de gravité de
$(n+1) K^x$.

Étant donné $g\in \Omega^*$, on peut donc trouver un hyperplan affine
$H^*$ de $(\mathbb{R}^{n+1})^*$ tel que $g$ soit le centre de gravité
de $\Omega^*\cap H^*$. En effet, il suffit de considérer le convexe
$K'=\Omega\cap H^{*g}$ et de considérer son point de Santaló $x$ :
alors $x$ minimise $\varphi$ sur $K'$ donc $\theta_\Omega(x)$ est
un multiple de $g$ ; l'hyperplan parallèle à $H^x$ qui contient
$g$ convient.

Le passage des cônes convexes aux convexes projectifs étant immédiat
on a montré le résultat suivant, qui m'a été suggéré par Greg Kuperberg
et est sans doute plus ou moins connu.
\begin{theo}\label{theo:convexe}
Soit $K$ un ouvert convexe propre d'un espace projectif réel $\project(V)$
et $x$ un point de $K$. Soit $K^*\subset \project(V^*)$ le convexe dual
de $K$, et $y$ son point de Santaló quand il est vu dans une carte affine
envoyant $x^*$ à l'infini. Alors l'hyperplan $x^\circ:=y^*$ de
$\project(V)$ a la propriété suivante : dans toute carte affine
envoyant $x^\circ$ à l'infini, $x$ est le barycentre de $K$.
\end{theo}

%%%%%%%%%%%
\subsection{Deux questions}

Ce qui précède amène deux questions qui semblent intéressantes, et dont je
n'ai pas trouvé de trace dans la littérature.

La première est de nature dynamique.
\begin{ques}
Soit $C$ un ouvert convexe projectif propre (en dimension quelconque). Que
peut-on dire de la dynamique de l'application
\begin{eqnarray*}
\circ^2 \colon  C &\to& C \\
 x &\mapsto& x^{*\circ * \circ}
\end{eqnarray*}
où les polarités sont entendues par rapport à $C$ et $C^*$~? 
\end{ques}
Ainsi, on peut se demander pour quels $C$ elle est d'ordre fini (et en particulier
dynamiquement triviale), chercher à décrire ses points fixes ou périodiques, etc.

Ensuite, on a vu que le centre de gravité pouvait être remplacé par le point de
Santaló et donner naissance à une polarité au même titre. Plus généralement,
appelons «~point spécial affine~» (de dimension $n$) toute application
$\gamma$ de l'ensemble des ouverts convexes bornés de $\mathbb{R}^n$ vers $\mathbb{R}^n$
telle que~:
\begin{enumerate}
\item pour tout convexe $C$, $\gamma(C)\in C$ ;
\item $\gamma$ est affinement équivariante~: $\gamma(A(C))=A(\gamma(C))$ pour
  toute transformation affine $A$ et tout convexe $C$.
\end{enumerate}
Un point spécial affine $\gamma$ et un ouvert convexe propre $C$
donnent naissance à une polarité projective convexe~:
à une droite $D$ ne rencontrant pas $\bar C$, on associe $D^\gamma:=\gamma(C)$
où $C$ est lu dans n'importe quelle carte affine envoyant $D$ à l'infini~;
à un point $x\in C$ on associe la droite $x^\gamma:=x^{*\gamma *}$.
Le théorème \ref{theo:convexe} dit exactement qu'en notant $\null^\square$
la polarité définie par le point de Santaló (qui est un point spécial affine),
quelque soit le convexe $C$ on a $x^{\square\circ}=x$. La question suivante
m'a été suggérée par Anne Parreau.
\begin{ques}
Existe-t-il un point spécial affine $\gamma$ définissant une polarité
involutive, c'est-à-dire tel que quelque soit l'ouvert projectif
convexe propre $C$ et pout tout $x\in C$, on ait $x^{\gamma\gamma}=x$~?
Si oui, peut-on caractériser l'ensemble des tels $\gamma$~?
\end{ques}

%%%%%%%%%%%
\subsection{Le cas d'un triangle}

La définition de la polarité convexe par rapport à un triangle est maintenant
facile à imaginer.
\begin{defi}
La droite polaire convexe $p^\circ$ d'un point générique $p\in\plan$ par rapport
à un triangle $\Delta=\{p_1,p_2,p_3\}$ est la polaire convexe de $p$
par rapport à la composante connexe de $\plan\setminus(D_1\cup D_2\cup D_3)$
qui contient $p$. 

Le point polaire $D^\circ$ d'une droite générique par rapport à $\Delta$
est le polaire de $D$ par rapport à l'unique composante connexe de
$\plan\setminus(D_1\cup D_2\cup D_3)$ qui ne rencontre pas $D$.
\end{defi}
Notons que pour voir qu'il y a bien une unique composante ne rencontrant pas une
droite générique donnée, il suffit de se placer dans une carte affine envoyant
cette droite à l'infini : $\plan\setminus(D_1\cup D_2\cup D_3)$
a alors une seule composante bornée.

Il est également important de remarquer qu'ici, tous les points spéciaux coïncident.
En effet, un triangle affine admet une transformation affine qui le laisse invariant,
mais ne fixe que son centre de gravité. Tout point spécial doit être invariant
par une telle transformation, donc est égal au centre de gravité. En particulier,
le point de Santaló d'un triangle est égal à son centre de gravité et
la discussion précédente montre que pour tous $p$ et $D$ génériques,
$p^{\circ\circ}=p$ et $D^{\circ\circ}=D$. Cette propriété d'involutivité est
vraie pour tout convexe homogène, c'est-à-dire tel que le groupe des transformations
projectives le conservant agisse transitivement sur son intérieur.

\begin{prop}
On a $p^\circ= p^\rep$ pour tout point générique $p$.
\end{prop}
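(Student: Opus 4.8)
The strategy is to compute $p^\circ$ explicitly in the coordinates defined by $\mathscr{R}_p$ and check that the resulting line has equation $x_1+x_2+x_3=0$, which by the earlier computation is precisely $p^\rep$. First I would observe that the connected component of $\plan\setminus(D_1\cup D_2\cup D_3)$ containing $p=[1:1:1]$ is, in the affine chart $(x_3\neq 0)$, the open triangle with vertices $p_1,p_2,p_3$ (whose affine coordinates I would write down), and $p$ is exactly the point with barycentric coordinates $(\tfrac13,\tfrac13,\tfrac13)$, i.e.\ the centroid of that triangle. So $p$ already \emph{is} the centroid of its component read in this particular chart.

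The key step is then to identify which line must be sent to infinity so that $p$ is the barycentre of the component: by definition $p^\circ$ is that line. Since $p$ is the barycentre of the component in the chart $(x_3\neq 0)$, the question is whether the line $D_3:\ x_3=0$ equals $p^\circ$ — but this is \emph{false} in general, because the definition of $p^\circ$ proceeds through the dual: $p^\circ=p^{*\circ *}$ where $p^{*\circ}$ is the Santaló/barycentre point of the dual component. The cleanest route is therefore to invoke the involutivity noted just above (a triangle is a homogeneous convex, so $p^{\circ\circ}=p$), which reduces the claim $p^\circ=p^\rep$ to showing $p^{\rep\circ}=p$, i.e.\ that the line $\ell:\ x_1+x_2+x_3=0$ satisfies $\ell^\circ=p$. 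By the definition of the polar point of a line, $\ell^\circ$ is the barycentre of the component of $\plan\setminus(D_1\cup D_2\cup D_3)$ not meeting $\ell$, computed in any affine chart sending $\ell$ to infinity.

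So the computation reduces to: take the chart with line at infinity $x_1+x_2+x_3=0$; in this chart the four components of $\plan\setminus(D_1\cup D_2\cup D_3)$ consist of one bounded triangle and three unbounded ones; the bounded one is the one missing $\ell$; its three vertices are $p_1,p_2,p_3$ (which lie in this chart since none lies on $\ell$); compute their coordinates in this chart and check that their centroid is the point $p=[1:1:1]$. Concretely, one can use the homogeneous-coordinate formula for the barycentre: in a chart with line at infinity $\{f=0\}$, the barycentre of the triangle with vertices $v_1,v_2,v_3$ is $\project(v_1/f(v_1)+v_2/f(v_2)+v_3/f(v_3))$; here $f=x_1+x_2+x_3$, $v_i$ is the $i$-th standard basis vector, $f(v_i)=1$, so the barycentre is $\project(\vec b_1+\vec b_2+\vec b_3)=[1:1:1]=p$. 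Hence $\ell^\circ=p$, and therefore $p^\circ=\ell=p^\rep$.

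**Main obstacle.** The only real subtlety is bookkeeping about \emph{which} affine chart and \emph{which} component enter each definition — the polar line of a point is defined via the dual, not directly — so one must be careful that the reduction via involutivity is legitimate and that "the component not meeting $\ell$" is indeed the bounded triangle with vertices $p_1,p_2,p_3$. Once this is sorted, everything collapses to the one-line barycentre computation above; there is no analytic difficulty, the point-of-Santaló machinery of the previous subsection being needed only to justify that these polarities are well defined, not for the triangle case where homogeneity already gives involutivity.
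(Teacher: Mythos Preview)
Your main argument is correct and is essentially the paper's proof: both pass to the affine chart sending the line $\ell=\{x_1+x_2+x_3=0\}=p^\rep$ to infinity, observe that $p_1,p_2,p_3$ then have affine coordinates $(0,0),(1,0),(0,1)$ and $p=(\tfrac13,\tfrac13)$ is their centroid, hence $\ell^\circ=p$ and (by the involutivity already recorded for the triangle) $p^\circ=\ell=p^\rep$. Your homogeneous barycentre formula $\project\big(\sum v_i/f(v_i)\big)$ is just a coordinate-free rephrasing of the same computation.

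One minor slip in your opening paragraph: in the chart $(x_3\neq 0)$ the component containing $p$ is \emph{not} a bounded triangle with vertices $p_1,p_2,p_3$, since $p_1=[1:0:0]$ and $p_2=[0:1:0]$ lie on the line $x_3=0$ and are therefore at infinity in that chart; the component is an unbounded quadrant there, and $p$ is not a barycentre of anything in that chart. You immediately abandon this line of thought, so it does no harm to the actual argument, but the sentence as written is inaccurate.
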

Par involutivité, le même résultat est vrai pour les droites.

\begin{proof}
On utilise les coordonnées homogènes $[x_1:x_2:x_3]$ définies par
le repère $(p_1,p_2,p_3,p)$ et on considère la carte affine 
\[(y,z)\mapsto [1-y-z:y:z]\]
qui envoie à l'infini $p^\rep$, d'équation $(x_1+x_2+x_3=0)$,
et dont l'inverse hors de cette droite s'écrit
\[[x_1:x_2:x_3]\mapsto \left(\frac{x_2}{x_1+x_2+x_3},\frac{x_3}{x_1+x_2+x_3}\right).\]

Dans cette carte affine, et toujours en identifiant un point
et ses coordonnées, on a $p_1=(0,0)$, $p_2=(1,0)$, $p_3=(0,1)$ et
$p=(\frac13,\frac13)$, de sorte que $p$ est le centre de gravité
du triangle de sommets $p_1,p_2,p_3$. La droite à l'infini
est donc la polaire convexe de $p$, et $p^\circ=p^\rep$.
\end{proof}

%%%%%%%%%%%%%%%%%%%%%%%%%%%%%%%%%%%%%%%%%%%%%%%%%%%%%%%%%%%%%%%%%%%%%%%%%%%%%
\section{Généralisation en toute dimension}

Considérons maintenant un espace projectif $\project(V)$ de dimension
$n\geqslant 2$, un \emph{simplexe} $\Delta=(p_1,\dots,p_{n+1})$
dont les \emph{faces} sont les hyperplans $H_i$ engendrés par 
$\Delta_i:=(p_1,\dots,\hat{p}_i,\dots,p_{n+1})$ 
où l'élément avec un chapeau est 
absent. Un point est générique s'il n'appartient à aucune face
et un hyperplan est générique s'il ne passe par aucun sommet.

%%%%%%%%%%%
\subsection{Polarité repère}

On procède exactement comme en dimension $2$ : un point générique
$p$ définit un repère projectif $\mathscr{R}_p=(p_1,\dots,p_{n+1},p)$,
qui définit un repère dual $\mathscr{R}_p^*=(q_1,\dots,q_{n+1},q)$
de $\project(V^*)$ avec $q_i=H_i^*$. Alors le polaire de $p$
par rapport à $\Delta$ est l'hyperplan $p^\rep=q^*$ de $\project(V)$.
Si $H$ est un hyperplan générique on définit $H^\rep=p^{*\rep*}$
où la polarité repère dans le dual est par rapport au simplexe
$\Delta^*:=(H_1^*,\dots,H_{n+1}^*)$. On a encore
$p^{\rep\rep}=p$.

%%%%%%%%%%%
\subsection{Polarité harmonique}

Il y a plusieurs façons équivalentes de définir la polarité harmonique.
On peut par exemple procéder par récurrence : comme
$\Delta_i$ est un simplexe de $H_i$, on considère
les points $u_i=H_i\cap(p_ip)$, et leurs hyperplans polaires
$U_i\subset H_i$ par rapport à $\Delta_i$. Alors les $U_i$,
qui sont des sous-espaces projectifs de dimension $n-2$ de $\project(V)$,
sont sur un même hyperplan qu'on note $p^\#$ et qu'on appelle
son polaire harmonique par rapport à $\Delta$. La construction de
$H^\#$ est analogue.

On peut aussi considérer les points 
\[v_{ij}=(p_ip_j)\cap(p_1 \cdots \hat{p}_i \cdots \hat{p}_j \cdots p_{n+1} p)\]
et les quatrièmes
harmoniques $v'_{ij}$ des triplets $(p_i,p_j,v_{ij})$. Alors
les $v'_{ij}$ sont sur un même hyperplan, qui se trouve être
$p^\#$.

Comme précédemment, on a $p^\#=p^\rep$.

%%%%%%%%%%%
\subsection{Polarités algébriques}

Cette notion de polarité se relie à celle de {\it dernière polarité} par 
rapport
à une hypersurface de degré $n+1$. Une telle hypersurface $\Sigma$  possède
une équation de degré $n+1$ qui a comme forme polaire (définie si la caractéristique
de $\mathbb{K}$ est nulle ou strictement supérieure à $(n+1)$) $\psi$ une forme $(n+1)$-
linéaire symétrique.
Étant donné un point $p$, projeté d'un vecteur $\vec u$, on peut contracter
$n$ fois $\psi$ avec $\vec u$, ce qui donne une forme linéaire
$\psi(\vec u,\vec u,\ldots,\vec u,\cdot)$ bien définie à un facteur près.
Si cette forme est non nulle, son noyau définit
un hyperplan qu'on appelle {\it dernier polaire} de $p$ par rapport à  
$\Sigma$.

Dans le cas où $\Sigma$ est la réunion des $H_i$,
le dernier polaire $p^\perp$ de $p$ est bien défini
et égal à $p^\rep$.

%%%%%%%%%%%
\subsection{Polarité par rapport à un convexe}

Étant donné un ouvert convexe propre $C$ de $\project(V)$,
on définit le polaire d'un hyperplan $H$ ne rencontrant pas $\bar C$
comme 
le centre de gravité de $C$ dans n'importe quelle carte affine envoyant 
$H$ à l'infini.
Comme le centre de gravité est une notion affine, ce point est bien défini.
Dans un espace projectif de dimension $n$, les hyperplans engendrés par les 
faces
d'un simplexe délimitent $n+2$ domaines convexes. Un hyperplan ne contenant 
aucun des
sommets du simplexe rencontre tous ces domaines sauf un. Le polaire de 
l'hyperplan par rapport
à ce convexe définit le polaire $H^\circ$ par rapport au simplexe.
Alors $H^\circ=H^\rep$ et on peut définir $p^\circ=p^{*\circ *}$.

%%%%%%%%%%%%%%%%%%%%%%%%%%%%%%%%%%%%%%%%%%%%%%%%%%%%%%%%%%%%%%%%%%%%%%%%%%%%%

\bibliographystyle{alpha}
\bibliography{biblio.bib}

%%%%%%%%%%%%%%%%%%%%%%%%%%%%%%%%%%%%%%%%%%%%%%%%%%%%%%%%%%%%%%%%%%%%%%%%%%%%%
%%%%%%%%%%%%%%%%%%%%%%%%%%%%%%%%%%%%%%%%%%%%%%%%%%%%%%%%%%%%%%%%%%%%%%%%%%%%%
%%%%%%%%%%%%%%%%%%%%%%%%%%%%%%%%%%%%%%%%%%%%%%%%%%%%%%%%%%%%%%%%%%%%%%%%%%%%%
\end{document}